%%%%%%%%%%%%%%%%%%%%%%%%%%%%%%%%%%%%%%%%%%%%%%%%%%%%%%%%%%%%%%%%%%%%%%%%%%%
%%% Title: Closed surfaces with different shapes that are indistinguishable by the SRNF
%%% Author: Eric P. Klassen, Peter W. Michor, 
%%%%%%%%%%%%%%%%%%%%%%%%%%%%%%%%%%%%%%%%%%%%%%%%%%%%%%%%%%%%%%%%%%%%%%%%%%%
\documentclass{amsart}
\usepackage{amsmath}
\usepackage{amssymb}
\usepackage{amsfonts}
\usepackage{graphicx}
\usepackage{algorithm}
\usepackage{algorithmic}
\usepackage[hyperindex=true,bookmarks=true,bookmarksnumbered=true]{hyperref}

% My definitions

\newtheorem{defi}{Definition}
\newtheorem{thm}{Theorem}

\newcommand{\reals}{\mathbb{R}}

\title[Different shapes that are indistinguishable by the SRNF]{Closed surfaces with different shapes that are indistinguishable by the SRNF}
\author{Eric Klassen}
\address{Eric P. Klassen, Department of Mathematics, Florida State University, Tallahassee, FL, 32306 USA}
\email{klassen@math.fsu.edu}
\author{Peter W.~Michor}
\address{Peter W.~Michor, Fakult\"at f\"ur Mathematik,
Universit\"at Wien, Os\-kar-Mor\-gen\-stern-Platz 1, A-1090 Wien, Austria.}
\email{peter.michor@univie.ac.at.}

\thanks{Eric Klassen gratefully acknowledges the support of the Simons Foundation (Grant \# 317865).}
\begin{document}
\maketitle
\begin{abstract}
The Square Root Normal Field (SRNF), introduced by Jermyn et al. in \cite{jermyn2012SRNF}, provides a way of representing immersed surfaces in $\reals^3$, and equipping the set of these immersions with a ``distance function" (to be precise, a pseudometric) that is easy to compute. Importantly, this distance function is invariant under reparametrizations (i.e., under self-diffeomorphisms of the domain surface) and under rigid motions of $\reals^3$. Thus, it induces a distance function on the shape space of immersions, i.e., the space of immersions modulo reparametrizations and rigid motions of $\reals^3$. In this paper, we give examples of the degeneracy of this distance function, i.e., examples of immersed surfaces (some closed and some open) that have the same SRNF, but are not the same up to reparametrization and rigid motions. We also prove that the SRNF does distinguish the shape of a standard sphere from the shape of any other immersed surface, and does distinguish between the shapes of any two embedded strictly convex surfaces.
\end{abstract}

\section{Introduction}
It is a fascinating mathematical problem to distinguish shapes of different surfaces, and to quantify how different these shapes are from each other. One promising candidate for solving this problem has been the SRNF (square root normal field) method introduced by Jermyn et al. in \cite{jermyn2012SRNF} and expanded upon in \cite{jermyn2017}. Given an oriented surface $M$ with a Riemannian metric, this method introduces a pseudometric on the space 
$$\hbox{Imm}(M,\reals^3): =\{\hbox{immersions }M \to \reals^3\}.$$
This pseudometric is invariant under the right action of the group $\hbox{Diff}_+(M)$  by composition (where $\hbox{Diff}_+(M)$ denotes the group of orientation preserving diffeomorphisms $M\to M$), and is invariant under the left action of the group of rigid motions of $\reals^3$. Because of these invariances, this pseudometric on Imm$(M,\reals^3)$ induces a pseudometric on the shape space
$$\mathcal{S}= \hbox{Imm}(M,\reals^3)/(\hbox{Diff}_+(M)\times\hbox{rigid motions of }\reals^3).$$

The purpose of this paper is to give examples demonstrating that this pseudometric fails to be a metric on shape space for every domain $M$. Some of these examples were already known for open surfaces (or surfaces with boundary). We include them here anyway, because they have not appeared in the literature. However, the main contribution of this paper is to give a way of constructing examples for closed surfaces as well. The paper is structured as follows. Section \ref{Review} offers a brief review of the SRNF method. In Section \ref{counterexamples}, we give some examples of surfaces (open and closed) whose shapes are not determined by their SRNF. In Section \ref{spherespecial}, we prove that the shape of the sphere {\em is} determined by its SRNF. In Section \ref{convexspecial}, we prove that if two embedded, strictly convex, closed surfaces have the same SRNF, then they are translates of each other. In Section \ref{existproof}, we prove a theorem that is used in the presentation of the most important example in Section \ref{counterexamples}.

\section{Review of the Square Root Normal Field Method}\label{Review}
In this paper, we will assume that $M$ is a smooth, connected, oriented Riemannian 2-dimensional manifold with or without boundary. Let Imm$(M,\reals^3)$ denote the space of immersions of $M$ into $\reals^3$. Given $f\in\hbox{Imm}(M,\reals^3)$, the orientation on $M$ and the standard orientation on $\reals^3$ imply an orientation on the normal bundle of $f(M)$ at each point of $M$. Thus, given $f\in\hbox{Imm}(M,\reals^3)$, we can define $n:M\to\reals^3$ by letting $n(x)$ be the oriented unit normal vector to $f(M)$ at $f(x)$. Given such an $f$, we can also define $a:M\to \reals$ by $a(x)=$ the local area multiplication factor of $f$ at $x$. To give a precise formula for $a(x)$, let $\{v,w\}$ be an orthonormal basis of $T_xM$. Then let $a(x)=|df_x(v)\times df_x(w)|$.

The square root normal field (SRNF) map $\Phi:\hbox{Imm}(M,\reals^3)\to C^\infty(M,\reals^3)$ is defined by $\Phi(f)=q$, where
$$q(x)=\sqrt{a(x)}n(x).$$

We now discuss some of the important properties of the SRNF. First, it clearly induces a function 
$$\Phi:\hbox{Imm}(M,\reals^3)/\hbox{translations}\to C^\infty(M,\reals^3)$$
since $\Phi(f)$ depends only on the first derivative of $f$. Note that $\hbox{Diff}_+(M)$ acts on $\hbox{Imm}(M,\reals^3)$ by right composition. We define a right-action of $\hbox{Diff}_+(M)$ on $C^\infty(M,\reals^3)$ by $(g*\phi)(x)=\sqrt{b(x)}g(\phi(x))$, where $b(x)=$ the area multiplication factor of $\phi$ at $x$. Is easy to check that the map $\Phi$ is equivariant with respect to these two actions; i.e.
$$\Phi(f)*\phi=\Phi(f\circ\phi).$$
In addition, the action of $\hbox{Diff}_+(M)$ on $C^\infty(M,\reals^3)$ that we just defined is by linear isometries, if we put the usual $L^2$ inner product on $C^\infty(M,\reals^3)$. The obvious action of $SO(3)$ on $C^\infty(M,\reals^3)$ is also by isometries with respect to the $L^2$ inner product. (This action is defined by $A*q(x)=Aq(x)$).

These facts are important because we can use the SRNF to define a distance function on $\hbox{Imm}(M,\reals^3)$ by 
\begin{equation}\label{distfuncdef}
d(f_1,f_2)=\|\Phi(f_1)-\Phi(f_2)\|
\end{equation}
 where the norm refers to the $L^2$ norm. Define the {\em shape space} of immersions of $M$ in $\reals^3$ by
$${\mathcal S}=\hbox{Imm}(M,\reals^3)/(\hbox{Diff}_+(M)\times\hbox{rigid motions of }\reals^3 ).$$
In order to define a distance function on ${\mathcal S}$, we wish to define a distance function on $\hbox{Imm}(M,\reals^3)$ that is invariant under the actions of $\hbox{Diff}_+(M)$ and the group of rigid motions of $\reals^3$. The distance function defined above \eqref{distfuncdef} has these invariance properties! This is the main motivation for the use of the SRNF in shape analysis.

Though we called the function $d$ defined on $\hbox{Imm}(M,\reals^3)$ a ``distance function", there are some difficulties associated with that terminology; these difficulties are the main subject of this paper.

First, note that if two immersions differ only by a translation, their distance is zero. We don't consider that to be a serious problem, because we want to mod out by the group of rigid motions of $\reals^3$, which includes translations. 

However, it is easy to create examples of pairs of immersions that do not represent the same elements of shape space, but that still have distance zero with respect to $d$. We now give some of these examples in order of increasing seriousness.
\section{Counterexamples to the injectivity of the SRNF map on shape space}\label{counterexamples}

\subsection{Cylinders}

Let $M$ denote the cylinder in $\reals^3$ defined by $x^2+y^2=1$ and $0\leq z\leq 1$; we have, of course, the identity immersion $\hbox{Id}:M\to \reals^3$. Given $r>0$, define the linear transformation $L:\reals^3\to\reals^3$ by $L(x,y,z)=(rx,ry,\frac{z}{r})$. It is immediate that $L\circ \hbox{Id}$ is also an immersion, and that $\hbox{Id}$ and $L\circ \hbox{Id}$ both give rise to the same SRNF. Furthermore, it is obvious that these two immersions of the cylinder are not related by rigid motions or reparametrization. Of course, both of these immersions have Gaussian curvature 0 everywhere, making them rather special.

\subsection{Paraboloids}

Let $a$ and $b$ be non-zero real numbers and let $S_{a,b}$ denote the graph of the function $z=ax^2+by^2$ in $\reals^3$. 

\begin{thm}
If $ab=cd$, then we can parametrize $S_{a,b}$ and $S_{c,d}$ in such a way that they have the same SRNF.
\end{thm}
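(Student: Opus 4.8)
The plan is to parametrize each paraboloid by its natural graph parametrization over $\reals^2$ and then compute the SRNF explicitly, looking for a change of variables on the domain that matches the two fields. For $S_{a,b}$, use $f(x,y) = (x, y, ax^2 + by^2)$. Then $f_x = (1, 0, 2ax)$ and $f_y = (0, 1, 2by)$, so $f_x \times f_y = (-2ax, -2by, 1)$, whose length is $\sqrt{1 + 4a^2x^2 + 4b^2y^2}$. Crucially, with this (non-isometric) graph parametrization, the area multiplication factor relative to the \emph{Euclidean} metric on the domain is exactly $|f_x \times f_y|$, so the SRNF becomes simply $q_{a,b}(x,y) = (f_x\times f_y)/\sqrt{|f_x\times f_y|}$, i.e. the normal vector divided by the square root of its own length. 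In other words $q_{a,b}(x,y) = v/\sqrt{|v|}$ where $v = (-2ax, -2by, 1)$.

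Next I would look for a diffeomorphism $\psi$ of the plane so that $q_{c,d}\circ\psi$ (reparametrized correctly, i.e. with the $\sqrt{\text{Jacobian}}$ factor) equals $q_{a,b}$. Because the third component of $v$ is pinned at $1$, matching the fields forces $\psi$ to send $(x,y)$ to $(\alpha x, \beta y)$ with $\alpha = a/c$ and $\beta = b/d$ — this makes the first two components of the (unnormalized) normal agree. The Jacobian of this $\psi$ is $\alpha\beta = ab/(cd)$, which is $1$ precisely under the hypothesis $ab = cd$; that is exactly where the hypothesis enters, and it is what makes the reparametrization $\sqrt{\text{Jacobian}}$ factor trivial so the normalized fields match on the nose. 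One should also note $\alpha,\beta > 0$ can be arranged (if the signs of $a,b$ versus $c,d$ differ one first applies a reflection of $\reals^3$, which is a rigid motion and preserves the SRNF up to the allowed equivalences, or one absorbs a sign into the orientation-preserving requirement — a small point to check).

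The remaining bookkeeping is to confirm that these graph parametrizations, viewed as immersions of a fixed oriented Riemannian surface $M = \reals^2$ with the standard metric, genuinely have the asserted SRNF, and that $\psi(x,y)=(\alpha x,\beta y)$ with $\alpha\beta=1$ lies in $\mathrm{Diff}_+(\reals^2)$ so that the equivariance identity $\Phi(f)*\psi = \Phi(f\circ\psi)$ from Section \ref{Review} applies. I do not expect any serious obstacle: the main (and only) subtlety is the sign/orientation issue when $a,b$ and $c,d$ have differing signs, and the observation — really the crux of the whole construction — that choosing the non-isometric graph parametrization rather than an arclength-type parametrization is what makes the area factor collapse into the normal's own length, decoupling the computation. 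Once that is seen, the hypothesis $ab=cd$ is manifestly the condition that balances the two anisotropic rescalings into an area-preserving reparametrization.
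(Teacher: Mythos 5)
Your proposal is correct and is essentially the paper's argument: the paper also uses an anisotropic linear reparametrization of the graph parametrization, namely $B(x,y)=\left(\tfrac{x}{a},\tfrac{y}{b},\tfrac{x^2}{a}+\tfrac{y^2}{b}\right)$, for which $B_x\times B_y=\left(-\tfrac{2x}{ab},-\tfrac{2y}{ab},\tfrac{1}{ab}\right)$ visibly depends only on $ab$, so applying it to both surfaces symmetrically gives identical SRNFs without your extra Jacobian bookkeeping. Your sign worry is moot: under $ab=cd$ one has $\alpha\beta=1>0$, so $\alpha$ and $\beta$ automatically have the same sign and $\psi$ is orientation preserving (and the normal computation goes through unchanged), with no reflection needed.
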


\begin{proof}
For a given $(a,b)$, parametrize $S_{a,b}$ by 
$$B(x,y)=\left({x\over a},{y\over b},{x^2\over a}+{y^2\over b}\right).$$ 
An easy computation then yields  
$$B_x\times B_y=\left(-\frac{2x}{ab},-\frac{2y}{ab}, \frac{1}{ab}\right).$$

Since $B_x\times B_y$ depends only on $ab$ (not on $a$ and $b$ individually), and the SRNF can be expressed as $\frac{B_x\times B_y}{\sqrt{|B_x\times B_y|}}$, the theorem follows.
\end{proof}

Once again, it is clear that if $a\neq \pm c$ and $b\neq\pm d$, then $S_{a,b}$ and $S_{c,d}$ are not equivalent under a rigid motion of $\reals^3$ or under reparametrization.

\subsection{Closed surfaces}\label{closedexamples}
\begin{defi} We define a {\em flat place} $D\subset \reals^2\times\{0\}$ to be a closed disc in $\reals^2\times\{0\}$ with $n$ disjoint open discs removed. These removed discs should be small enough that their boundaries are disjoint from each other and disjoint from the boundary of the original disc. Hence the boundary of a flat place consists of $n+1$ disjoint round circles; we call the largest of these, $C_0$, the {\em outer boundary component}, and the others $C_1,\dots, C_n$ the {\em inner boundary components}.
\end{defi}
Let $M\subset \reals^3$ be a closed, oriented, smooth surface with the following two properties. 
\begin{enumerate}
\item $M$ contains a flat place $D$ and
\item $D$ separates $M$ into $n+1$ components $M_0$, $M_1$, $\dots$,$M_n$, where $\partial M_i=C_i$ for each $i$. 
\end{enumerate} 
We have, of course, the identity immersion $\hbox{Id}:M\to \reals^3$. We now construct a new immersion $f:M\to\reals^3$.

First, choose a new flat place $\tilde D\subset \reals^2\times{0}$, whose outer boundary $\tilde C_0$ is the same as the outer boundary $C_0$ of $D$, while each of its inner boundary components $\tilde C_i$ is required to have the same radius as $C_i$, but may have a different location. For each $i=1,\dots,n$, let $T_i$ denote the translation $\reals^3\to\reals^3$ that takes $C_i$ to $\tilde C_i$. Now define $f:M\to\reals^3$ as follows:
\begin{itemize}
\item On $M_0$, $f(x)=x$
\item On each $M_i$ where $i>0$, $f(x)=T_i(x)$
\item On $D$, $f:D\to\tilde D$ is an area preserving diffeomorphism that is the identity on a neighborhood of $C_0$ and agrees with $T_i$ on a neighborhood of $C_i$ for each $i=1,\dots, n$. The existence of such an area-preserving diffeomorphism will be proved in Section \ref{existproof}, Theorem \ref{existdifftheorem}.
\end{itemize}
It is clear that $f$ is an area-preserving immersion, and also that for each $x\in M$, the unit normal vector of $M$ at $x$ is equal to the unit normal vector of $f(M)$ at $f(x)$. It follows immediately that $\Phi(\hbox{Id})=\Phi(f)$.

Here is an amusing way to visualize this situation. Imagine a round table-top with a chessboard painted on it, and with the 32 chess pieces sitting on the board. Consider the surface $M$ to be the boundary surface of this entire configuration: the tabletop together with the board and the pieces. If we move the chess pieces around on the board (without changing the orientation of each piece), the resulting surface is related to the original one as in the above construction. Thus, all surfaces obtained by moving around the chess pieces are indistinguishable using the SRNF.

\subsection{A closed example with a flip}

Here is one more example, similar to the chessboard example above. Let $M$ be a closed orientable surface in $\reals^3$ and assume we can write $M=M_0\cup D\cup M_1$ where $D=\{(x,y,z):z=0\hbox{ and }1\leq x^2+y^2\leq 4\}$, and $M_0$ intersects $D$ along its outer boundary and $M_1$ intersects $D$ along its inner boundary. Again, 
$\hbox{Id}:M\to \reals^3$ is an immersion of $M$ into $\reals^3$. We define another immersion $f:M\to\reals^3$ as follows:
\begin{itemize}
\item On $M_0$, $f(x,y,z)=(x,y,z)$
\item On $M_1$, $f(x,y,z)=(-x,-y,-z)$
\item On the annulus $D$, $f(x,y,0)=(x,y,0)$ on a neighborhood of the outer boundary, $f(x,y,0)=(-x,-y,0)$ on a neighborhood of the inner boundary, and $f$ rotates each concentric circle of $D$ by an angle that smoothly varies from $0$ radians on the outer boundary to $\pi$ radians on the inner boundary.
\end{itemize}
Once again, it is immediate that $\Phi(\hbox{Id})=\Phi(f)$, while these two immersions are not related by a reparametrization or by a rigid motion. In terms of the above visualization, we have turned one or more of the chess pieces upside down while simultaneously giving it a 180 degree horizontal rotation.
It is clear that the ``flat place" plays a crucial role in the last two examples. Thus there is an obvious question:

{\bf Question:} Are there examples of non-injectivity of the SRNF map on shape space if the domain is a closed orientable manifold, and the immersions don't have any flat places? 

\section {The sphere is distinguished from all other immersed surfaces by its SRNF}\label{spherespecial}

In this section, we prove that the standard round sphere is distinguished from every other immersion of $S^2$ by its SRNF. Let $S^2$ denote the unit sphere in $\reals^3$; we have the immersion Id$:S^2\to\reals^3$. 
\begin{thm}
If $f:S^2\to\reals^3$ is any immersion such that $\Phi(\hbox{Id})=\Phi(f)$, then $f=\hbox{Id}$ (up to translation in $\reals^3$).
\end{thm}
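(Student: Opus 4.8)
The plan is to exploit the fact that the round sphere's SRNF $q=\Phi(\hbox{Id})$ is, up to the square-root-of-area factor, simply the Gauss map, and that for the unit sphere the area factor is constant. Concretely, $\Phi(\hbox{Id})(x)=n(x)=x$, so $q:S^2\to\reals^3$ is the inclusion, which is a diffeomorphism onto $S^2$ with $|q(x)|=1$ everywhere. If $f$ is another immersion with $\Phi(f)=q$, write $\Phi(f)(x)=\sqrt{a_f(x)}\,n_f(x)$; taking norms gives $a_f(x)=|q(x)|^2=1$, so $f$ is \emph{area preserving}, and its unit normal satisfies $n_f(x)=q(x)=x$ for all $x\in S^2$.

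The next step is to extract geometric consequences of ``the Gauss map of $f$ equals the identity.'' First I would observe that $n_f\equiv\hbox{Id}$ forces, at each point $x$, the tangent plane $df_x(T_xS^2)$ to equal $x^\perp=T_xS^2$ (viewing $T_xS^2\subset\reals^3$ in the standard way). Differentiating the relation $n_f=\hbox{Id}$ and using the Weingarten/shape-operator formula $dn_f = -df\circ S_f$ (with $S_f$ the shape operator of the immersion $f$), I get that $df_x\circ S_{f,x}$ acts as the identity on $T_xS^2$; hence $S_{f,x}=(df_x)^{-1}$ on each tangent space, which in particular shows $S_{f,x}$ is invertible and that $df_x$ is, up to composition with the shape operator, forced. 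Combining invertibility of $S_f$ everywhere on $S^2$ with the Gauss--Bonnet theorem (the integral of the Gaussian curvature $\det S_f$ over $S^2$ is $4\pi>0$, and $\det S_f$ never vanishes) tells us $\det S_f>0$ everywhere, so $f$ is a \emph{locally convex} immersion of $S^2$; by Hadamard's theorem such an $f$ is an embedding onto a convex surface. This reduces the problem to: a convex embedded surface whose outward Gauss map is $x\mapsto x$ and which encloses the same area element structure as the sphere.

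The heart of the argument is then a Minkowski-type uniqueness statement. For a convex body, prescribing the Gauss map together with the surface area measure determines the body up to translation; here the condition $a_f\equiv 1$ says the pullback of the area form under $n_f=\hbox{Id}$ is the standard area form on $S^2$, i.e. the support function $h$ of the convex body $K=f(S^2)$ satisfies the Monge--Amp\`ere equation $\det(\nabla^2 h + h\cdot\hbox{Id})=1$ on $S^2$, exactly the equation solved by the support function $h\equiv 1$ of the unit ball. Uniqueness of solutions to this Minkowski problem (equivalently, the infinitesimal rigidity / Minkowski uniqueness theorem for convex bodies: two convex bodies with the same surface area measure on $S^2$ differ by a translation) then forces $K$ to be a translate of the unit ball, and since $f$ is a diffeomorphism onto $K$ inducing the identity Gauss map, $f$ is the identity up to that translation. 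I expect the main obstacle to be making this last step rigorous in the smooth category with the right hypotheses: one must either (i) invoke the classical Minkowski uniqueness theorem in a form that applies to a $C^\infty$ convex surface arising as a diffeomorphic image of $S^2$, or (ii) give a direct argument — e.g. an integral/divergence-theorem identity showing $\int_{S^2}(h-1)\,\text{(something nonnegative)}=0$, or a maximum-principle argument on the Monge--Amp\`ere equation above — to conclude $h\equiv 1$ up to a linear (translation) term. Verifying that the area-preservation hypothesis $a_f\equiv 1$ translates exactly into the stated Monge--Amp\`ere equation, and handling the a priori possibility that $f$ is only an immersion before convexity is established, are the technical points that need care.
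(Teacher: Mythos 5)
Your argument is correct, but it concludes by a genuinely different rigidity theorem than the paper's. Both proofs open identically: from $\Phi(f)=\Phi(\hbox{Id})$ one reads off $a_f\equiv 1$ and $n_f(x)=x$, so $f$ is locally an inverse of the Gauss map of its image. The paper then notes that the area multiplication factor of the Gauss map equals $|K|$, so $|K|\equiv 1$; continuity plus Gauss--Bonnet give $K\equiv 1$, and Liebmann's theorem in its version for \emph{immersed} surfaces finishes the proof without ever needing embeddedness. You instead differentiate $n_f=\hbox{Id}$ to get invertibility of the shape operator, use Gauss--Bonnet to get $K>0$, invoke Hadamard's theorem to upgrade $f$ to an embedding onto a convex surface, and then apply Minkowski's uniqueness theorem (the surface area measure of $f(S^2)$, read off from $a_f\equiv 1$ and $n_f=\hbox{Id}$, is the round one). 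In effect you reduce the sphere theorem to the paper's Section \ref{convexspecial} result for convex embedded surfaces, with Hadamard supplying the convexity hypothesis; the cost is the extra step through Hadamard, the benefit is that the same scheme would treat any strictly convex model, not just the round sphere. Two small points: with the convention $dn_f=-df\circ S_f$ the relation you derive is $df_x\circ S_{f,x}=-\hbox{(inclusion)}$ rather than $+$; this is harmless (invertibility and the sign of a $2\times 2$ determinant are unaffected), but state it consistently. Moreover, taking determinants in that same relation, together with $\det df_x=a_f(x)=1$, already yields $K=\det S_{f,x}=1$ directly, which would let you skip the Monge--Amp\`ere reformulation and pass straight to either Liebmann or Minkowski.
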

\begin{proof} Clearly, $x$ is the oriented unit normal to $S^2$ at $x$. Because of our assumption that $\Phi(\hbox{Id})=\Phi(f)$, it follows immediately that $x$ is also the oriented unit normal to $f(S^2)$ at $f(x)$. Therefore, locally, $f$ is the inverse of the Gauss map, which can be defined locally on $f(U)$, where $U$ is any open set in $S^2$ that is sufficiently small so that $f$ restricted to $U$ is injective. Since $\Phi(\hbox{Id})=\Phi(f)$, it follows that Id and $f$ have the same area multiplication factor at each point of $S^2$; hence they both have area multiplication factor of $1$ at each point. It follows immediately from the definition of Gaussian curvature that the area multiplication factor of the Gauss map at any point of a surface is equal to the absolute value of the Gaussian curvature of the surface at the same point. Hence, the Gaussian curvature of $f(S^2)$ at each point is equal to $\pm 1$. Since the Gaussian curvature of $f(S^2)$ is a continuous function of $S^2$, it follows that it is either 1 at every point or $-1$ at every point. By the Gauss Bonnet theorem, the Gaussian curvature of $f(S^2)$ is 1 at every point. It then follows that $f(S^2)$ is a unit sphere, by the classical theorem that any closed immersed surface in $\reals^3$ with constant Gaussian curvature $=1$ is a unit sphere in $\reals^3$.  (See, for example, \cite{LiebmannThm} for a recent statement and proof of this theorem for the case of immersed surfaces.)
\end{proof}

\section{A uniqueness result for convex embedded surfaces}\label{convexspecial}

\begin{thm}
Let $f_1, f_2:S^2\to\reals^3$ be embeddings such that $f_1(S^2)$ and $f_2(S^2)$ are both convex surfaces and both have Gaussian curvature strictly positive at every point. If $\Phi(f_1)=\Phi(f_2)$, then $f_1$ and $f_2$ differ by a translation.
\end{thm}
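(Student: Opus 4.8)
The plan is to reduce this to a classical rigidity theorem for convex surfaces by using the SRNF data to match up the two embeddings via their Gauss maps. Since $f_1(S^2)$ and $f_2(S^2)$ are convex with strictly positive Gaussian curvature, the Gauss maps $\nu_i:S^2\to S^2$ (sending $x$ to the oriented unit normal of $f_i(S^2)$ at $f_i(x)$) are diffeomorphisms. The first step is to reparametrize: replace $f_i$ by $g_i = f_i\circ\nu_i^{-1}$, so that the oriented unit normal to $g_i(S^2)$ at $g_i(u)$ is exactly $u$ for every $u\in S^2$. Because the SRNF is equivariant under reparametrization and the $L^2$ norm is invariant, and because $\nu_1,\nu_2$ are in general \emph{different} diffeomorphisms, I first need to observe that $\Phi(f_1)=\Phi(f_2)$ forces $\nu_1=\nu_2$: indeed $\Phi(f_i)(x) = \sqrt{a_i(x)}\,\nu_i(x)$, and since $a_i(x)>0$, the unit vector $\nu_i(x)$ is recovered as $\Phi(f_i)(x)/|\Phi(f_i)(x)|$, so $\nu_1=\nu_2=:\nu$ and also $a_1=a_2$ pointwise. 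Hence after the common reparametrization by $\nu^{-1}$ we may assume $f_1,f_2$ are both parametrized by their Gauss map, with the \emph{same} area multiplication factor $a$ at each point.

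Next I would bring in support functions. For a convex surface parametrized by its outward unit normal $u$, the \emph{support function} $h_i:S^2\to\reals$ is $h_i(u)=\inner{g_i(u)}{u}$, and the entire embedding $g_i$ is determined by $h_i$ (in fact $g_i(u) = h_i(u)\,u + \nabla_{S^2} h_i(u)$, the inverse Weingarten/Minkowski correspondence). A translation of the surface by a vector $v$ corresponds to adding the linear function $u\mapsto\inner{v}{u}$ to $h_i$. So the theorem is equivalent to: $h_1 - h_2$ is the restriction to $S^2$ of a linear function on $\reals^3$. The area multiplication factor $a(u)$ of the parametrization $g_i$ by the normal is exactly $1/K_i(u)$ where $K_i$ is the Gauss curvature (this is the inverse of the fact used in the sphere section, that the Gauss map has area factor $|K|$); equivalently, $a(u)$ is the product of the principal radii of curvature of the $i$-th surface at the point with normal $u$. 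In terms of the support function, this product of principal radii is a specific second-order elliptic expression in $h_i$ — up to lower order it is $\det(\nabla^2 h_i + h_i\,\mathrm{Id})$ on each tangent plane, i.e. $a(u) = \det\big(\mathrm{Hess}_{S^2}h_i(u) + h_i(u)\,g_{S^2}\big)$.

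So the problem becomes: two functions $h_1,h_2$ on $S^2$ both satisfying a positive-definite Hessian condition (convexity) and with $\det(\mathrm{Hess}\,h_1 + h_1 g) = \det(\mathrm{Hess}\,h_2 + h_2 g)$ everywhere — show their difference is a first spherical harmonic. This is precisely the uniqueness statement for the Christoffel–Minkowski problem with prescribed product of principal radii as a function of the normal, and the key tool is a maximum-principle/integral argument: setting $w = h_1-h_2$ and subtracting the two Monge–Ampère-type equations, one gets a linear second-order elliptic equation $L w = 0$ for $w$ whose coefficients are built from the (positive-definite) cofactor matrices of $\mathrm{Hess}\,h_i + h_i g$; the zeroth-order term works out to be nonpositive in the relevant sense, so $w$ must lie in the kernel, which consists exactly of the linear functions. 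I expect the main obstacle to be setting up this elliptic comparison cleanly and identifying the kernel: one has to justify that the linearized operator is genuinely of the form whose only solutions on $S^2$ are restrictions of linear functions (equivalently, invoking the known uniqueness for the even Minkowski/Christoffel problem, or running the classical Hilbert-type integral identity $\int_{S^2} w\,Lw\,=\,0$ together with positivity of the associated quadratic form). An alternative, possibly cleaner route for the main obstacle: since $a = 1/K_1 = 1/K_2$, both surfaces have the \emph{same} Gauss curvature as a function of the outward normal, and this is exactly the setting of Alexandrov's uniqueness theorem for the Minkowski problem — two convex bodies whose curvature functions (as functions of the normal direction) agree must be translates — which can be quoted directly; the task then reduces to checking that the hypotheses of that theorem are met by our $g_1,g_2$, which they are once we have established $\nu_1=\nu_2$ and $a_1=a_2$ above.
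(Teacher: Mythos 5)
Your proposal is correct and follows essentially the same route as the paper: reparametrize via the (inverse) Gauss map, use $\Phi(f_1)=\Phi(f_2)$ to conclude the normals and area factors agree so that $K_1=K_2$ as functions of the outward normal, and then quote the Minkowski uniqueness theorem to conclude the surfaces are translates. The support-function/Monge--Amp\`ere digression is an (unnecessary) sketch of how that uniqueness theorem is proved; your ``alternative, cleaner route'' at the end is exactly what the paper does.
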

\begin{proof}
Since $f_1(S^2)$ is convex and has strictly positive Gaussian curvature, it follows that its Gauss map $G:f_1(S^2)\to S^2$ is a diffeomorphism.  If we replace both $f_1$ and $f_2$ by their compositions with $f_1^{-1}\circ G^{-1}$, the hypotheses of the theorem still hold. Hence, for the remainder of the proof, we may assume that $f_1$ is the inverse of the Gauss map $G:f_1(S^2)\to S^2$. Because $\Phi(f_1)=\Phi(f_2)$, it follows that $f_2$ is the inverse of the Gauss map $f_2(S^2)\to S^2$. For $i=1,2$, define $K_i:S^2\to\reals$ by $K_i(x)=$ the Gaussian curvature of $f_i(S^2)$ at $f_i(x)$. Because $\Phi(f_1)=\Phi(f_2)$, we know that $f_1$ and $f_2$ have the same area multiplication factor. But, by definition of Gaussian curvature, the area multiplication factor of the Gauss map is equal to the absolute value of the Gaussian curvature. Since $f_1(S^2)$ and $f_2(S^2)$ are both assumed to have positive Gaussian curvature, it follows that $K_1(x)=K_2(x)$ for every $x\in S^2$. According to the Minkowski \cite{Minkowski}, it follows that $f_1(S^2)$ and $f_2(S^2)$ differ only by a translation.

\end{proof}

\section{Proof of a theorem on area-preserving maps of a flat place}\label{existproof}

The goal of this section is to prove a theorem needed in Section \ref{closedexamples} of this paper. First, we need the following theorem. It remains true for manifolds with more general boundaries with the same proof;  for a formulation for Whitney manifold germs see \cite[end of Section 4]{Michor19a}.

\begin{thm}\label{MichorAreaPres} Let $M$ be a connected, 
compact, oriented manifold with corners with $\dim(M)=m$.
Let $\omega_0,\omega\in\Omega^{m}(M)$ be two 
volume forms (both $>0$) with $\int_M\omega_0=\int_M\omega$.
Suppose that there is a diffeomorphism $f:M\to M$ such that $f^*\omega |_U = \omega_0|_U$ for an open neighborhood of $\partial M$ in $M$. 

Then there exists a diffeomorphism $\tilde f:M\to M$ with $\tilde f^*\omega = \omega_0$ such that $\tilde f$ equals $f$ on an open neighborhood of $\partial M$.
\end{thm}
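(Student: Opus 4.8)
The plan is to use the Moser deformation trick, adapted to respect the boundary condition. First I would reduce to a problem about the one-parameter family of volume forms interpolating between $f^*\omega$ and $\omega_0$. Set $\omega_t = (1-t)\omega_0 + t\,f^*\omega$ for $t\in[0,1]$. Since $f^*\omega|_U = \omega_0|_U$, all the forms $\omega_t$ agree with $\omega_0$ on $U$, and in particular each $\omega_t$ is a positive volume form on all of $M$ (positivity is automatic on the convex combination of two positive forms on an oriented manifold). Moreover $\int_M \omega_t = (1-t)\int_M\omega_0 + t\int_M f^*\omega = \int_M\omega_0$ for all $t$, using the hypothesis $\int_M\omega_0 = \int_M\omega$ and the fact that $f$ is an orientation-preserving diffeomorphism (one should check orientation-preservation, or at least that $f^*\omega>0$, which is forced since near $\partial M$ we have $f^*\omega = \omega_0 > 0$ and $M$ is connected).

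Next I would produce a family of vector fields generating an isotopy. Because $\frac{d}{dt}\omega_t = f^*\omega - \omega_0$ has integral zero over $M$ and vanishes on $U\supset\partial M$, it is exact with a primitive supported away from $\partial M$: that is, there exists $\sigma_t\in\Omega^{m-1}(M)$ with $d\sigma_t = f^*\omega-\omega_0$ and $\sigma_t = 0$ on a (possibly smaller) neighborhood of $\partial M$. This is the step I expect to be the main obstacle — one needs a version of the de Rham / Poincaré lemma "with compact support in the interior" on a manifold with corners, giving an $(m-1)$-form whose differential is a prescribed exact top form of integral zero, with control on the support near the boundary. On a compact oriented manifold with boundary this follows from Poincaré–Lefschetz duality ($H^m_c(M\setminus\partial M)\cong H_0(M)\cong\reals$, detected by integration), or more concretely by a partition-of-unity argument patching local primitives; the corners cause no essential trouble since they form a set of measure zero and the relevant cohomology is unchanged. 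Then define the time-dependent vector field $X_t$ by $\iota_{X_t}\omega_t = -\sigma_t$, which is possible and smooth because $\omega_t$ is a nowhere-zero top form (so $\iota_{(\cdot)}\omega_t$ is a bundle isomorphism $TM\to\Lambda^{m-1}T^*M$); note $X_t = 0$ near $\partial M$ since $\sigma_t$ vanishes there, so in particular $X_t$ is tangent to $\partial M$ and its flow preserves $M$.

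Then I would integrate $X_t$ to get a flow $\psi_t:M\to M$ with $\psi_0 = \mathrm{id}$; completeness is automatic since $M$ is compact and $X_t$ vanishes near $\partial M$. The Moser computation gives
\begin{equation*}
\frac{d}{dt}\psi_t^*\omega_t = \psi_t^*\big(\mathcal L_{X_t}\omega_t + \tfrac{d}{dt}\omega_t\big) = \psi_t^*\big(d\iota_{X_t}\omega_t + (f^*\omega-\omega_0)\big) = \psi_t^*\big(-d\sigma_t + d\sigma_t\big) = 0,
\end{equation*}
using Cartan's formula and $d\omega_t = 0$. Hence $\psi_1^*\omega_1 = \psi_0^*\omega_0 = \omega_0$, i.e. $\psi_1^*(f^*\omega) = \omega_0$. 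Since $X_t = 0$ on a neighborhood of $\partial M$, the flow $\psi_t$ is the identity there. Finally set $\tilde f = f\circ\psi_1$. Then $\tilde f^*\omega = \psi_1^*f^*\omega = \omega_0$, and on a neighborhood of $\partial M$ we have $\psi_1 = \mathrm{id}$ so $\tilde f = f$ there, as required. The only remaining care is to confirm that $\tilde f$ is a diffeomorphism of $M$ (it is a composition of two diffeomorphisms) and that it maps $M$ to $M$ respecting the boundary (true, since $\psi_1$ fixes a neighborhood of $\partial M$ pointwise and $f:M\to M$ is given). I would present the interior-primitive lemma as the one genuinely technical input, citing the standard Moser-type argument and, for the manifold-with-corners refinement, the reference \cite{Michor19a} already mentioned in the excerpt.
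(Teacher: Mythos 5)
Your proposal is correct and follows essentially the same Moser-trick argument as the paper: the same linear interpolation $\omega_t$, the same compactly supported primitive obtained from $H^m_c$ of the complement of a boundary neighborhood, the same vector field $\iota_{X_t}\omega_t=-\sigma$, and the same final composition $\tilde f=f\circ\psi_1$. The only detail the paper makes explicit that you gloss over is choosing the boundary neighborhood $V$ so that $M\setminus\overline V$ is \emph{connected}, which is what guarantees $H^m_c(M\setminus\overline V)\cong\reals$ via integration and hence the existence of the compactly supported primitive.
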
 

\begin{proof} 
Choose an open neighborhood $V$ of $\partial M$ in $M$ 
with $V\subset \overline V\subset U$ and such that $M\setminus \overline V$ is connected.
Put $\omega_1:= f^*\omega$ and $\omega_t:=\omega_0+t(\omega_1-\omega_0)$ for $t\in[0,1]$; each $\omega_t$ 
is again a volume form.  

We search for a curve of diffeomorphisms $t\mapsto f_t$ with 
$f_t^*\omega_t=\omega_0$; then we should have $\frac{\partial}{\partial t}(f_t^*\omega_t)=0$.
Since $\int_M(\omega_1-\omega_0)=0$ and $\omega_1-\omega_0\in \Omega^m_c(M\setminus \overline V)$ has compact support and $\int_{M\setminus\overline V}: H^m_c(M\setminus\overline V)$ is an isomorphism, we have $[\omega_1-\omega_0]=0\in H^m_c(M\setminus\overline V)$, so 
$\omega_1-\omega_0=d\psi$ for some $\psi\in\Omega^{m-1}_c(M\setminus\overline V)$. 
Consider the time-dependent vector field $\eta_t:=(\frac{\partial}{\partial t}f_t)\circ f_t^{-1}$; then 
by \cite[31.11]{Michor08} we have:
\begin{align*}
0 &\overset{\text{wish}}{=} \tfrac{\partial}{\partial t}(f_t^*\omega_t) 
     = f_t^*\mathcal{L}_{\eta_t}\omega_t + f_t^*\tfrac{\partial}{\partial t}\omega_t 
     = f_t^*(\mathcal{L}_{\eta_t}\omega_t + \omega_1 - \omega_0), \\
0 &\overset{\text{wish}}{=} \mathcal{L}_{\eta_t}\omega_t + \omega_1 - \omega_0 
     = d i_{\eta_t}\omega_t + i_{\eta_t}d\omega_t + d\psi 
     = d i_{\eta_t}\omega_t + d\psi. 
\end{align*}
We can choose $\eta_t$ uniquely by $i_{\eta_t}\omega_t=-\psi$, since $\omega_t$ 
is nondegenerate for all $t$. Note that $\eta_t$ has compact support in $\operatorname{supp}(\psi)\subset M\setminus\overline V$.
Consequently,  the evolution operator 
$f_t=\Phi^\eta_{t,0}$ 
exists for $t\in [0,1]$, by \cite[3.30]{Michor08}. 
We have, using \cite[31.11.2]{Michor08}, 
\begin{equation*}
\tfrac{\partial}{\partial t}(f_t^*\omega_t) 
     = f_t^*(\mathcal{L}_{\eta_t}\omega_t + d\psi) 
     = f_t^*(d i_{\eta_t}\omega_t + d\psi) = 0,
\end{equation*}
so $t\mapsto f_t^*\omega_t = 
\omega_0$ is constant, $f_t|_{V} = \operatorname{Id}$ and 
$\omega_0= f_1^*\omega_1 = f_1^* f^* \omega = (f\circ f_1)^*\omega$. 
So $\tilde f = f\circ f_1$ is the solution.
\end{proof}

We  now state the theorem we used in Section \ref{closedexamples}:

\begin{thm}\label{existdifftheorem}
Let $D=\{(x,y)\in\reals^2:x^2+y^2\leq 1\}$. Let $D_1,\dots,D_n$ be closed discs contained in $D$ that are disjoint from each other and also disjoint from $\partial D$. Let $\tilde D_1,\dots,\tilde D_n$ be another collection of closed discs in $D$, also disjoint from each other and from $\partial D$. Assume that for each $i$, $D_i$ and $\tilde D_i$ have the same radius, and let $T_i$ be the translation of $\reals^2$ such that $T_i(D_i)=\tilde D_i$. Then there exists an area preserving diffeomorphism $D\to D$ that is the identity on a neighborhood of $\partial D$, and agrees with $T_i$ on a neighborhood of $D_i$ for each $i$.
\end{thm}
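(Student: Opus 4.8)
The plan is to split the construction into a topological stage and a measure-theoretic stage. First I would produce \emph{some} diffeomorphism $g\colon D\to D$ that is the identity on a neighborhood of $\partial D$ and equals $T_i$ on a closed neighborhood $D_i^{+}$ of $D_i$ for each $i$, ignoring areas entirely. Then I would use Theorem~\ref{MichorAreaPres} to replace $g$ by an area-preserving map, correcting $g$ only in the region that lies away from $\partial D$ and away from all the $D_i$, so that the prescribed behavior of $g$ near those sets survives.

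\textbf{The topological stage.} Write $c_i,r_i$ for the center and radius of $D_i$, and pick $\rho>0$ so small that the closed discs $D_i^{+}:=\{x:|x-c_i|\le r_i+\rho\}$, as well as their translates $\tilde D_i^{+}:=T_i(D_i^{+})$, are pairwise disjoint and disjoint from $\partial D$. I would then build an isotopy of embeddings $\bigsqcup_i D_i^{+}\hookrightarrow\mathring D$ from the inclusion to the map that is $T_i$ on each $D_i^{+}$, in three explicit steps: shrink each $D_i^{+}$ affinely toward $c_i$ to a tiny concentric disc; move the tiny discs along a path $(\gamma_1(t),\dots,\gamma_n(t))$ in the configuration space $\operatorname{Conf}_n(\mathring D)=\{(p_1,\dots,p_n)\in(\mathring D)^n: p_j\neq p_k\}$ from $(c_1,\dots,c_n)$ to $(\tilde c_1,\dots,\tilde c_n)$, which exists because $\mathring D$ is a connected surface and along which the discs can be kept disjoint and inside $\mathring D$ by choosing their common scale small enough (the path is compact); then grow each tiny disc affinely about $\tilde c_i$ back out to $\tilde D_i^{+}$. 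Composing the affine and translation maps shows the final embedding is $x\mapsto x+(\tilde c_i-c_i)=T_i(x)$ on $D_i^{+}$. Since this isotopy of embeddings stays inside a fixed compact subset of $\mathring D$, the isotopy extension theorem gives a compactly supported ambient isotopy of $\mathring D$, which extends by the identity to a diffeomorphism $g\colon D\to D$ with $g|_{D_i^{+}}=T_i$ and $g=\mathrm{id}$ near $\partial D$; moreover $g$ is isotopic to the identity, hence orientation preserving.

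\textbf{The area-preserving stage.} Let $\omega=dx\wedge dy$ be the standard area form on $D$ and put $\omega_0:=g^{*}\omega$, a positive volume form with $\int_D\omega_0=\int_D\omega$. Since $g$ is the identity near $\partial D$ and equals the area-preserving translation $T_i$ on $D_i^{+}$, we have $\omega_0=\omega$ on a neighborhood of $\partial D$ and on each $D_i^{+}$. Set $M:=D\setminus\bigcup_i\mathring D_i$, a compact connected oriented surface whose boundary is $\partial D\sqcup\bigsqcup_i\partial D_i$. From $\omega_0=\omega$ on each $D_i$ and $\int_D\omega_0=\int_D\omega$ we get $\int_M\omega_0=\int_M\omega$, so I can apply Theorem~\ref{MichorAreaPres} on $M$ with the two volume forms interchanged (seeking $\phi$ with $\phi^{*}\omega_0=\omega$) and with $f=\mathrm{id}_M$, which is admissible because $\omega_0=\omega$ near $\partial M$. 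This yields a diffeomorphism $\phi\colon M\to M$ with $\phi^{*}\omega_0=\omega$ and $\phi=\mathrm{id}$ near $\partial M$. Extending $\phi$ by the identity over each $D_i$ gives a diffeomorphism $\hat\phi\colon D\to D$ that is the identity near $\partial D$ and on a neighborhood of each $D_i$, with $\hat\phi^{*}\omega_0=\omega$ on all of $D$ (on $M$ by construction, on each $D_i$ because $\omega_0=\omega$ there). Then $F:=g\circ\hat\phi$ satisfies $F^{*}\omega=\hat\phi^{*}g^{*}\omega=\hat\phi^{*}\omega_0=\omega$, so $F$ is area preserving, it is the identity near $\partial D$, and it equals $T_i$ on a neighborhood of $D_i$, which is exactly what the theorem asserts.

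The real content is Theorem~\ref{MichorAreaPres}; the area-preserving stage is then bookkeeping, whose one genuine idea is to excise the $D_i$ and exploit that Theorem~\ref{MichorAreaPres} leaves a neighborhood of the boundary fixed, so the correction cannot disturb the interpolation near the $D_i$. The fussiest elementary point is the topological stage: one cannot move the discs along straight segments, since they may collide, so the ``move'' must be routed through $\operatorname{Conf}_n(\mathring D)$, and a little care is needed to arrange that $g$ is \emph{literally} $T_i$ near $D_i$ rather than merely carrying $D_i$ onto $\tilde D_i$ — which is why the shrink/move/grow is performed with explicit affine maps and translations.
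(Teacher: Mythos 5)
Your proposal is correct and follows essentially the same route as the paper: first obtain a diffeomorphism that is the identity near $\partial D$ and equals $T_i$ near each $D_i$ (the paper simply cites Hirsch's isotopy/disc theorem where you construct it via configuration spaces and isotopy extension), then correct it to be area preserving using Theorem~\ref{MichorAreaPres}. Your second stage is in fact slightly more careful than the paper's one-line invocation, since by excising the discs you make the $\partial D_i$ into genuine boundary components of $M$, which is exactly what is needed for Theorem~\ref{MichorAreaPres} to leave the map unchanged near the $D_i$.
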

\begin{proof}
It follows from Theorem 3.2 on page 186 of Hirsch's {\em Differential Topology} \cite{HirschDiffTop} that there exists a diffeomorphism $F:D\to D$ such that $F$ is the identity on a neighborhood of $\partial D$, and agrees with $T_i$ on a neighborhood of each $D_i$. Theorem \ref{MichorAreaPres} implies that there is an area preserving diffeomorphism of $D$ that agrees with $F$ on a neighborhood of $\partial D$ and on a neighborhood of each $D_i$.

\end{proof}

\bibliographystyle{plain}
\bibliography{}

\end{document}